\newcommand{\Span}{\operatorname{span}}
\newcommand{\diag}{\operatorname{diag}}
\newcommand{\Id}{\operatorname{Id}}
\newtheorem{theorem}{Theorem}[section]
\newtheorem{lemma}{Lemma}[section]
\theoremstyle{definition}
\newtheorem{remark}{Remark}[section]
\author{Nicholas F. Marshall}
\address{Department of Mathematics, Yale University, 
New Haven, CT 06511, USA}
\title{The Stability of the First Neumann Laplacian Eigenfunction Under Domain
Deformations and Applications}
\keywords{ Neumann Laplacian eigenfunctions; graph Laplacian; diffusion
geometry; manifold learning}
\begin{document}

\begin{abstract}
The robustness of manifold learning methods is often predicated on the stability
of the Neumann Laplacian eigenfunctions under deformations of the assumed
underlying domain.  Indeed, many manifold learning methods are based on
approximating the Neumann Laplacian eigenfunctions on a manifold that is
assumed to underlie data, which is viewed through a source of distortion. In
this paper, we study the stability of the first Neumann Laplacian eigenfunction
with respect to deformations of a domain by a diffeomorphism.  In particular,
we are interested in the stability of the first eigenfunction on tall thin
domains where, intuitively, the first Neumann Laplacian eigenfunction should
only depend on the length along the domain. We prove a rigorous version of this
statement and apply it to a machine learning problem in geophysical
interpretation. 
\end{abstract}

\maketitle

%
%
\section{Introduction and Main Result}
\subsection{Motivation}
We are motivated by a machine learning problem in the field of geophysical
interpretation: given a seismic image, the objective is to reparameterize depth
in the image (the $y$-axis) such that each layer in the seismic image has
constant depth. We propose an unsupervised diffusion based method
to achieve this goal. In particular, we propose defining a discrete diffusion
process that travels rapidly along the layers of a seismic image, and slowly
perpendicular to the layers. This anisotropic diffusion process corresponds to
an isotropic diffusion process on some tall thin domain formed by contracting
the metric of the seismic image along its layers, see the illustration in Figure
\ref{fig0}.

\begin{figure}[h!]
\centering
\begin{tabular}{ccc}
\raisebox{-.5\height}{
\includegraphics[height=.11\textheight]{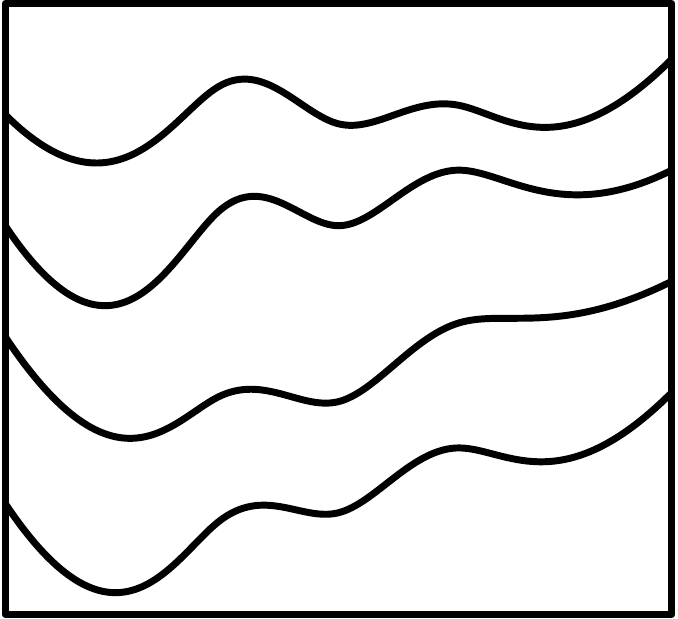}} &
$\left( \substack{\text{underlying} \\\text{tall thin} \\ \text{domain}}
\raisebox{-.5\height}{
\includegraphics[height=.11\textheight]{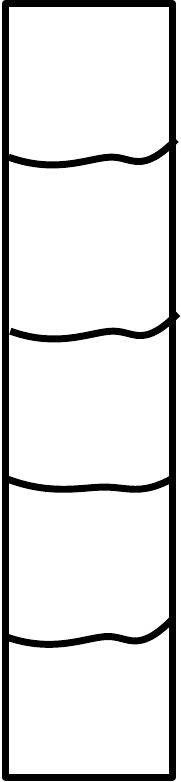}}
\qquad
\right)$ &
$\overset{\text{output}}{\mapsto}$
\raisebox{-.5\height}{
\includegraphics[height=.11\textheight]{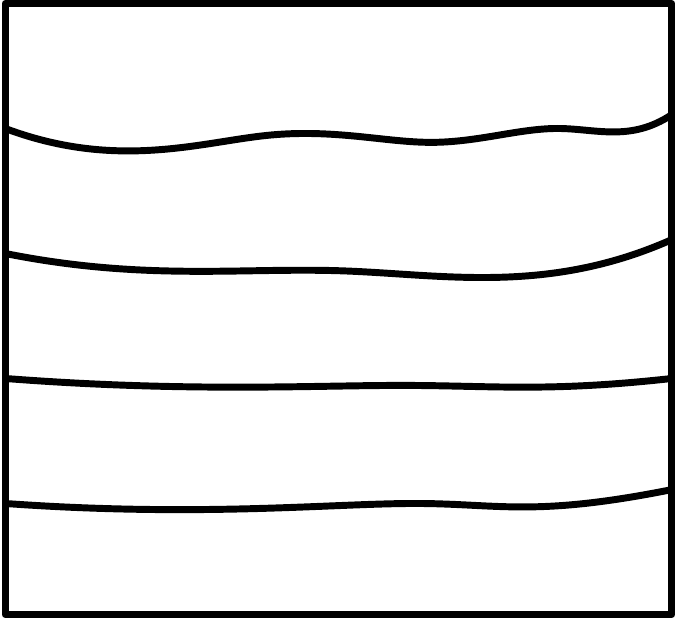}}
\end{tabular}
\caption{Cartoon of a seismic image (left), underlying tall thin domain
(center), and reparameterized image (right).} \label{fig0}
\end{figure}

The first eigenfunction of this anisotropic diffusion operator is
then used to
reparameterize depth in the seismic image. Specifically, if the diffusion
operator is defined with Neumann boundary conditions (as is standard for
diffusion based machine learning methods \cite{Lafon:2004}), then the first
eigenfunction of the diffusion operator will approximate the first Neumann
Laplacian eigenfunction of the underlying tall thin domain. In the ideal case,
this underlying domain will be a tall rectangle, whose first Neumann Laplacian
eigenfunction is $\cos(\pi y/h)$,  where $h$ is the height of the rectangle. We
can use this eigenfunction to assign each pixel in the seismic image its height
in the underlying tall rectangle, thereby flattening the layers of the image.

\subsection{Stability of the first Neumann eigenfunction}
The main challenge in the proposed algorithm is constructing the
anisotropic diffusion process, which rapidly travels along the layers of the
given seismic image. Inaccuracies in the construction of this anisotropic
diffusion process, will cause the underlying domain on which the anisotropic
diffusion process is an isotropic diffusion process to be a deformed version of
a tall rectangle. This issue motivates the following question: how
stable is the first Neumann Laplacian eigenfunction under domain deformations?
Numerically, we observe that on long thin domains, the Neumann Laplacian
eigenfunctions demonstrate a high degree of stability with respect to
domain deformations. To illustrate this idea, we compute the first
Neumann Laplacian eigenfunction numerically on two different domains, and we
visualize each eigenfunction on its respective domain using a color map, see
Figure \ref{fig:firsteigen}.

\begin{figure}[ht!]
\centering
\begin{tabular}{cc}
$\Omega_1$ & $\Omega_2$ \\
\raisebox{-.5\height}{\includegraphics[width = .27\textwidth]{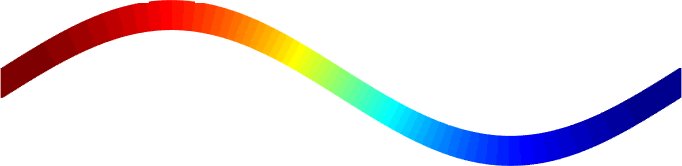}} & 
\raisebox{-.5\height}{\includegraphics[width = .27\textwidth]{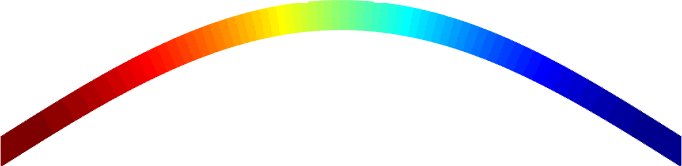}} \\
\end{tabular} 
\caption{The first Neumann Laplacian eigenfunction on two different long thin
domains.}
\label{fig:firsteigen}
\end{figure}

To formalize a notion of stability, let $R \subset \mathbb{R}^d$
be an open bounded connected domain with a piecewise smooth boundary $\partial
R$. Suppose that $R$ is transformed by a diffeomorphism $\varphi$ into a domain $\Omega :=
\varphi(R)$.  Let $0 = \eta_0 < \eta_1 \le \eta_2 \le \cdots$ and
$v_0,v_1,v_2,\ldots$ denote the Neumann Laplacian eigenvalues and eigenfunctions
on $R$, and let $0 = \mu_0 < \mu_1 \le \mu_2 \le \cdots$ and $u_0, u_1, u_2,
\ldots$ denote the Neumann Laplacian eigenvalues and eigenfunctions on $\Omega$.
More precisely, $\eta_j,v_j$ and $\mu_j,u_j$ are characterized by the following
eigenvalue problems
$$
\left\{ \begin{array}{cc}
-\Delta v_j = \eta_j v_j & \text{in } R \\
\frac{\partial}{\partial n} v_j = 0 & \text{on } \partial R
\end{array} \right.
\quad \text{and} \quad
\left\{ \begin{array}{cc}
-\Delta u_j = \mu_j u_j & \text{in } \Omega \\
\frac{\partial}{\partial n} u_j = 0 & \text{on } \partial \Omega,
\end{array} \right.
$$
where $\Delta$ denotes the Laplacian, and $n$ denotes an outward normal to the
boundary of the domain. We can now formulate precise questions about the
stability of the first Neumann Laplacian eigenfunction. For example, we can
consider the distance between $u_1 \circ \varphi$ and $v_1$ with respect to
the $L^2(R)$-norm. More generally, we can ask: how accurately can we represent
$u_1 \circ \varphi$ in the span of the first $k$ eigenfunctions
$v_1,\ldots,v_k$?  Quantitative answers to these questions will provide some
understanding of the stability of the first Neumann eigenfunction that we
observe both numerically and in machine learning applications.

\subsection{Preview of application results}
The stability of the first Neumann eigenfunction is also evident in the
application results. Given a seismic image, we construct a
discrete diffusion process that travels rapidly along the layers of the image as
described in \S \ref{sec:application}. Given the high level of noise in the
image, this anisotropic diffusion process cannot be defined perfectly,
yet  when we reparameterize depth in the image using the first eigenfunction of
the diffusion, the layers are roughly flat, see Figure \ref{fig:flattening2}.
Moreover, the reparameterized image actually encodes more detailed geometric
information: by zooming into the interval $[0.6,1.2]$ on the $y$-axis of the
reparameterized space, we observe that the individual layers of the image have
been automatically separated by the reparameterization, which makes it possible
to use post-processing to further refine the results. 

\begin{figure}[h!]
\centering
\begin{tabular}{cc}
\raisebox{-.5\height}{ \includegraphics[height=0.17\textwidth]{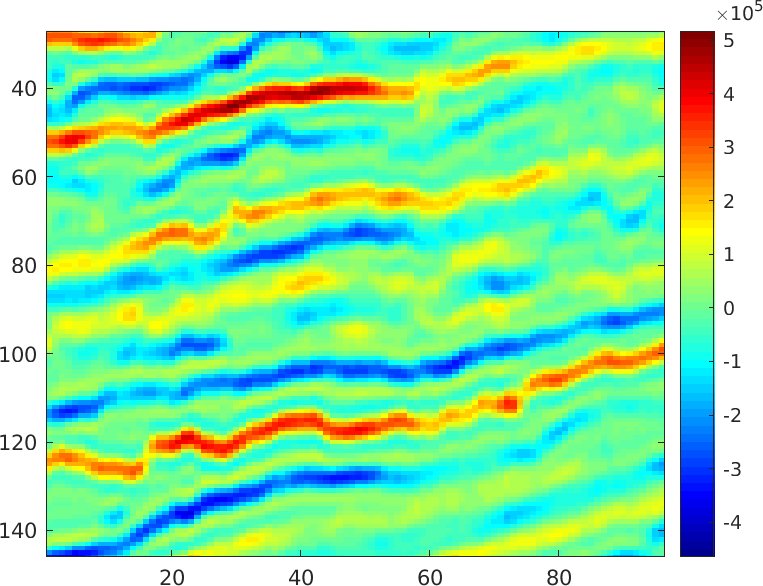}} { $\overset{\text{output}}{\mapsto}$}
\raisebox{-.5\height}{ \includegraphics[height=0.17\textwidth]{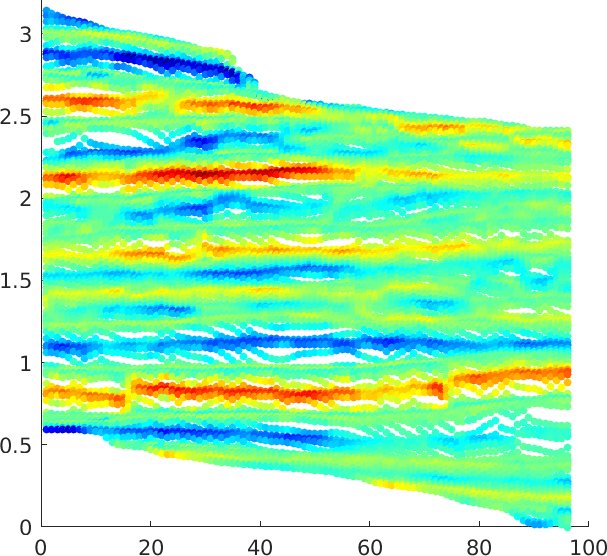}}
 $ \substack{\text{zoomed} \\ \text{image}}
\raisebox{-.5\height}{ \includegraphics[height=0.17\textwidth]{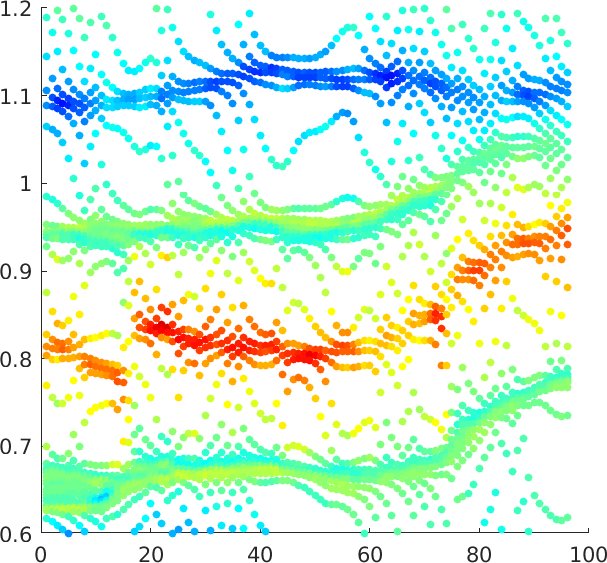}}
$
\end{tabular}

\caption{Seismic image (left), result of diffusion based reparameterization
(center),  and zoomed image (right).}
\label{fig:flattening2}
\end{figure}


%
%
\subsection{Main Result}  \label{mainresult}
Let $R \subset \mathbb{R}^d$ be an open bounded connected domain of unit measure
with a piecewise smooth boundary $\partial R$.  Suppose that $\varphi: R
\rightarrow \Omega$ is a diffeomorphism, and suppose that $J(x) = \frac{\partial
\varphi_i}{\partial x_j}(x)$ is the Jacobian matrix of this diffeomorphism.  Let
$0 = \eta_0 < \eta_1 \le \eta_2 \le \cdots$ and $v_0,v_1,v_2,\ldots$ denote the
Neumann Laplacian eigenvalues and eigenfunctions on $R$, respectively, and let
$0 = \mu_0 < \mu_1 \le \mu_2 \le \cdots$ and $u_0,u_1,u_2,\ldots,$ denote the
Neumann Laplacian eigenvalues and eigenfunctions on $\Omega$, respectively. We
refer to $R$ as the reference domain, and $\Omega$ as the deformed domain. For
example, the domain $R$ may be some nice domain, where the Neumann Laplacian
eigenfunctions are well understood, while $\Omega$ is a deformed version of this
nice domain.  To study the stability of the first Neumann Laplacian
eigenfunction we will consider the quantity 
$$
\|(\Id - P_{V_k}) u_1 \circ \varphi \|_{L^2(R)}, \quad \text{for} \quad k
=1,2,\ldots,
$$
where $\Id - P_{V_k}$ is the projection on the space orthogonal to $V_k =
\Span\{ v_1,\ldots,v_k\}$. This quantity measures how much
of the function $u_1 \circ \varphi$ cannot be represented in the linear span of
the first $k$ Neumann Laplacian eigenfunctions $v_1,\ldots,v_k$. In Remark
\ref{ex1}, we describe how bounds on the quantity $\|(\Id - P_{V_k}) u_1
\circ \varphi \|_{L^2(R)}$ can be used to understand the observed stability of
the first Neumann Laplacian eigenfunction on tall thin domains.

\begin{theorem} \label{thm1}
Suppose that the singular values of the Jacobian matrix $J(x)$ are contained in
the interval $(1-\varepsilon,1+\varepsilon)$ on $R$, and suppose
that $0 \le \varepsilon  d \le 1/10$. Then,
$$
\left\| \left(\Id - P_{V_k} \right) u_1 \circ \varphi \right\|_{L^2(R)}^2 \le 20
 \frac{\eta_1  \varepsilon   d }{\eta_{k+1} - \eta_1} +
\varepsilon  d,
$$
where $\Id - P_{V_k}$ denotes the projection on the space orthogonal to $V_k
:= \Span \{ v_1,\ldots,v_k\}$.
\end{theorem}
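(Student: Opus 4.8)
The plan is to expand $w := u_1\circ\varphi$ in the orthonormal eigenbasis $\{v_j\}_{j\ge 0}$ of $L^2(R)$, writing $c_j := \langle w, v_j\rangle_{L^2(R)}$ (the function $w$ lies in $H^1(R)$ since $\varphi$ is a diffeomorphism). Because $v_0$ is the unit constant and $V_k = \Span\{v_1,\dots,v_k\}$ excludes it, the target quantity splits as
\[
\|(\Id - P_{V_k})w\|_{L^2(R)}^2 = c_0^2 + \sum_{j>k} c_j^2 .
\]
I would control the two pieces separately: the tail $\sum_{j>k}c_j^2$ by a spectral-gap argument, and the mean term $c_0^2$ using that $u_1$ integrates to zero on $\Omega$.

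For the tail, the Dirichlet form diagonalizes as $\int_R|\nabla w|^2 = \sum_{j\ge 1}\eta_j c_j^2$. Subtracting $\eta_1\|w\|^2 = \eta_1\sum_{j\ge 0}c_j^2$ and discarding the nonnegative terms with $1\le j\le k$ gives
\[
(\eta_{k+1}-\eta_1)\sum_{j>k}c_j^2 \le \sum_{j\ge 1}(\eta_j-\eta_1)c_j^2 = \int_R|\nabla w|^2 - \eta_1\|w\|_{L^2(R)}^2 + \eta_1 c_0^2 .
\]
It then remains to show the energy defect $\int_R|\nabla w|^2 - \eta_1\|w\|^2$ is $O(\eta_1\varepsilon d)$. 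Here I use the chain rule $\nabla w(x) = J(x)^\top(\nabla u_1)(\varphi(x))$ with the singular-value hypothesis, giving $|\nabla w|^2 \le (1+\varepsilon)^2 |(\nabla u_1)\circ\varphi|^2$, and the change of variables $y=\varphi(x)$ with $|\det J|\in((1-\varepsilon)^d,(1+\varepsilon)^d)$. Normalizing $u_1$ in $L^2(\Omega)$, this yields
\[
\int_R|\nabla w|^2 \le \frac{(1+\varepsilon)^2}{(1-\varepsilon)^d}\,\mu_1, \qquad \|w\|_{L^2(R)}^2 \ge \frac{1}{(1+\varepsilon)^d}.
\]

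To close the estimate I need the eigenvalue comparison $\mu_1 \le \eta_1(1+O(\varepsilon d))$. I would obtain this by transplanting $v_1$ to $\Omega$: use $f = v_1\circ\varphi^{-1} - \overline{v_1\circ\varphi^{-1}}$ as a trial function in the variational characterization $\mu_1 = \min_{f\perp 1}\int_\Omega|\nabla f|^2/\int_\Omega f^2$. The same chain-rule and change-of-variables bookkeeping, now for $\varphi^{-1}$ (whose singular values lie in $(\tfrac1{1+\varepsilon},\tfrac1{1-\varepsilon})$), bounds the numerator by $\frac{(1+\varepsilon)^d}{(1-\varepsilon)^2}\eta_1$, while the subtracted mean is small because $\int_R v_1 = 0$, keeping the denominator $\ge (1-\varepsilon)^d - O(\varepsilon d)$. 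Substituting back shows the energy defect is at most $C\eta_1\varepsilon d$. For the mean term, $c_0 = \int_R w = \int_\Omega u_1\,|\det J|^{-1}$, and since $\int_\Omega u_1 = 0$ I may replace $|\det J|^{-1}$ by $|\det J|^{-1}-1$; Cauchy--Schwarz then gives $c_0^2 \le |\Omega|\,\sup_\Omega\big(|\det J|^{-1}-1\big)^2 = O\big((\varepsilon d)^2\big)$.

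The main obstacle is the careful bookkeeping of the $(1\pm\varepsilon)^d$ distortion factors: each of $\int|\nabla w|^2$, $\|w\|^2$, $\mu_1$, and $c_0$ carries such a factor, and to reach the clean constants $20$ and $1$ I must expand these uniformly and repeatedly invoke $\varepsilon d\le 1/10$ (for instance via $(1-\varepsilon)^{-d}\le e^{\varepsilon d/(1-\varepsilon)}$ and $(\varepsilon d)^2\le \tfrac1{10}\varepsilon d$) to absorb every higher-order term. The eigenvalue comparison $\mu_1\approx\eta_1$ is the key conceptual input; once it is secured, combining the tail bound with $c_0^2 = O((\varepsilon d)^2)$ produces the stated inequality $20\,\eta_1\varepsilon d/(\eta_{k+1}-\eta_1) + \varepsilon d$.
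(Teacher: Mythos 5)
Your proposal is correct and follows essentially the same route as the paper: the same splitting of $\|(\Id-P_{V_k})u_1\circ\varphi\|^2$ into $c_0^2$ plus a tail, the same two transplantation estimates (the trial function $v_1\circ\varphi^{-1}$ with its mean subtracted to get $\mu_1\le\eta_1(1+O(\varepsilon d))$, and the chain-rule/change-of-variables bound $\int_R|\nabla(u_1\circ\varphi)|^2\lesssim\mu_1$, which are the paper's Lemmas~\ref{lem1} and~\ref{lem2}), the same use of $\int_\Omega u_1=0$ to make $c_0^2=O((\varepsilon d)^2)$ (Lemma~\ref{lem3}), and the same spectral-gap assembly with $\varepsilon d\le 1/10$ absorbing higher-order terms. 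The only differences are cosmetic: you carry the $(1\pm\varepsilon)^d$ factors explicitly where the paper introduces an auxiliary determinant bound $\delta$ and substitutes $\delta=2\varepsilon d$ at the end.
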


Informally speaking, Theorem \ref{thm1} says that if the deformation $\varphi$
is small and the gap $\eta_{k+1} - \eta_1$ is large, then the function $u_1
\circ \varphi$ is roughly contained in the linear span of the first $k$ Neumann
Laplacian eigenfunctions $v_1,\ldots,v_k$.  We note that the constants appearing
in the statement of Theorem \ref{thm1} have not been optimized, and are for
illustrative purposes.  Furthermore, we remark that the term $\varepsilon 
d$ that appears in the right hand side of the above inequality can be removed if
instead of $\|(\Id - P_{V_k}) u_1 \circ \varphi \|_{L^2(R)}$, we consider the
quantity $\|(\Id - P_{\overline{V}_k}) u_1 \circ \varphi \|_{L^2(R)}$, where
$\overline{V}_k = \Span\{v_0,v_1,\ldots,v_k\}$.  In the following remark, we
apply Theorem \ref{thm1} to understand the case where the reference domain is a
tall rectangle.

\begin{remark}[Tall thin reference domain] \label{ex1}
Suppose that the rectangle $R = [0,1/10] \times [0,10] \subset
\mathbb{R}^2$ is given.  For this rectangle, the first $100$ Neumann Laplacian
eigenfunctions $v_1,\ldots,v_{100}$ are functions of only the $y$-variable
(which varies in the vertical direction).  In particular, these eigenfunctions
are of the form $\cos( \pi j y /10)$, for $j = 1,\ldots,100$.  If we include the
constant eigenfunction $v_0$, and define $\overline{V}_{100} = \Span
\{v_0,v_1,\ldots,v_{100}\}$, then by the discussion following Theorem
\ref{thm1} we have,
$$
\left\| \left( \Id - P_{\overline{V}_{100}} \right) u_1 \circ \varphi
\right\|_{L^2(R)}^2 \le 20  \frac{2 (\pi^2/10^2) \varepsilon}{\pi^2 10^2 - \pi^2}
< \frac{\varepsilon}{200},
$$
when $\varepsilon > 0$ is sufficiently small. Moreover, since the first $100$
Neumann Laplacian eigenfunctions are functions of the $y$-variable, the above
inequality has an interesting consequence. Let $V$ denote the span of all
functions on the rectangle that only depend on the $y$-variable, and set $P_H =
\Id - P_V$. Then,
$$
\left\| P_H u_1 \circ \varphi \right \|_{L^2(R))} < \frac{\varepsilon}{200}.
$$
That is to say, the function $u_1 \circ \varphi$ is essentially a function of
the $y$-variable of the underlying tall thin rectangle, and it has very little
variation in the horizontal direction. As a consequence, level sets of the
function $u_1 \circ \varphi$ are approximately horizontal lines.
\end{remark}

\subsection{Diffusion geometry methods}

Our approach falls under the class of diffusion geometry machine learning
methods, which are popular methods of organizing data that have a theoretical
basis in analysis. Diffusion geometry methods originated with Diffusion Maps
\cite{Lafon:2004}, and are based on constructing a diffusion operator on data
whose eigenfunctions encode the pertinent geometric information for the given
application, see for example \cite{Lederman:2014, Hirn:2014, Talmon:2013b,
Singer:2011, Schclar:2010, Plessis:2009}.  In this paper we present a
specialized construction of a diffusion operator whose first eigenfunction can
be used to organize the layers within a seismic image. To justify the robustness
of the proposed method, we study the stability of these eigenfunction under
domain deformations. Our main result, Theorem \ref{thm1}, acts as a first step
to understanding the stability of the algorithm. Diffusion geometry methods are
highly related to other spectral methods such as Laplacian eigenmaps
\cite{Belkin:2003b}, and Theorem \ref{thm1} also serves to partially illuminate
such techniques.

\subsection{Manifold Straightening}
Let us reiterate the main idea of the application using the language of manifold
learning.  Suppose that $\mathcal{M}$ is a manifold whose metric is given
locally, but for which no global coordinates are given.  Furthermore, suppose
that the manifold has a smooth vector field, which provides an orientation to
each neighborhood.  Assume that the given manifold is a distorted version of an
underlying manifold where the vector field is constant, i.e, all vectors point
in the same direction.  Our objective is to recover the coordinates for this
underlying intrinsic manifold. Our approach has three basic steps:

\begin{enumerate}
\item Contract the metric in the direction of the vector field.
\item Compute the first Neumann Laplacian eigenfunction on the
stretched domain.
\item Use this Neumann Laplacian eigenfunction to recover the height variable on the
underlying manifold.
\end{enumerate}

\begin{figure}[ht!]
\centering
\includegraphics[width=.6\textwidth]{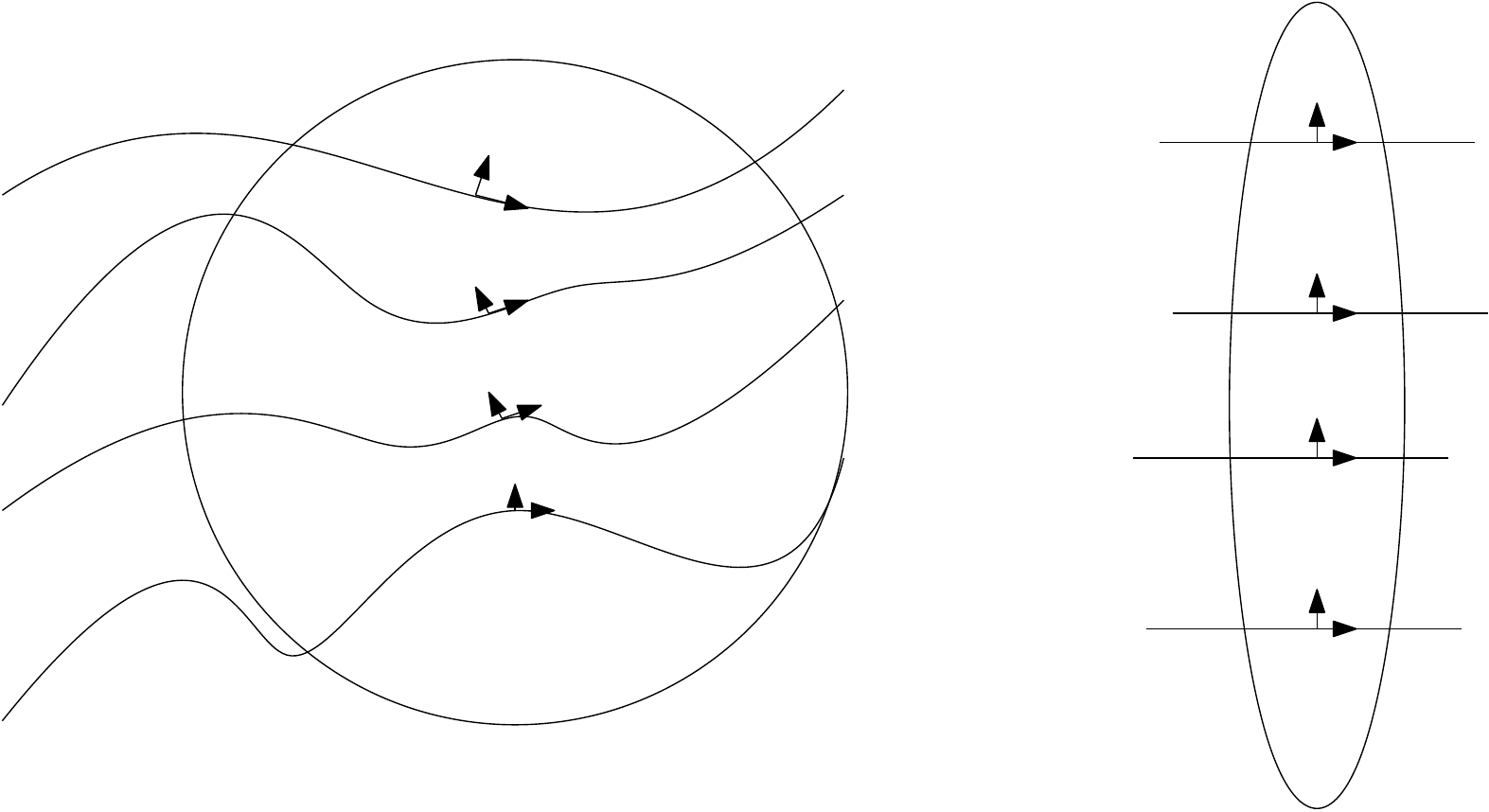}
\caption{Cartoon of a manifold with a vector field (left), and
result of contracting the metric along the vector field (right).} \label{fig:manifold}
\end{figure}
In particular, when the underlying domain is approximately shaped like a tall
rectangle, the height variable can be recovered from the eigenfunction using the
$\arccos$ function.

\subsection{Organization} The remainder of the paper is organized as follows. In
\S\ref{sec:proof} we introduce notation, outline the main idea of the
proof of the Theorem, and provide a detailed proof. In \S\ref{sec:application}
we introduce the application problem, outline the main idea of the algorithm,
and display results. In \ref{sec:details}, the details of the data
adaptive smoothing method, diffusion operator construction, and
reparameterization method are provided.
\section{Proof of main result} \label{sec:proof}

\subsection{Notation and preliminaries} Suppose that $R$ is an open bounded
connected subset of $\mathbb{R}^d$ of unit measure with a piecewise smooth
boundary $\partial R$. We say that $0 = \eta_0 < \eta_1 \le \eta_2 \le \cdots$
and $v_0,v_1,v_2,\ldots$ and the Neumann Laplacian eigenvalues and
eigenfunctions, respectively, if they are solutions to the boundary value
problem
$$
\left\{ \begin{array}{cc}
- \Delta v_j = \eta_j v_j & \text{in } R \\
\frac{\partial}{\partial n} v_j = 0 & \text{on } \partial R,
\end{array} \right.
$$
where $n$ denotes an outward normal to the boundary of the domain. Suppose that
$\varphi : R \rightarrow \Omega$ is a diffeomorphism, and let $0 = \mu_0 < \mu_1
\le \mu_2 \le \cdots$ and $u_0,u_1,u_2,\ldots$ denote the Neumann Laplacian
eigenvalues and eigenfunctions on $\Omega$, respectively. Let $J(x) =
\frac{\partial \varphi_i}{\partial x_j}(x)$ denote the Jacobian matrix of
the diffeomorphism $\varphi$. If $y := \varphi(x)$ and $u \in C^1(\Omega)$, 
then the chain rule can be expressed by
\[
J^T(\varphi^{-1}(y)) \nabla_y u(y) = \nabla_x u(\varphi(x)),
\]
where 
$$
\nabla_y = \left( \frac{\partial}{\partial y_1}, \frac{\partial}{\partial
y_2} \right)^\top \quad \text{and} \quad \nabla_x =
\left( \frac{\partial}{\partial x_1}, \frac{\partial}{\partial x_2}\right)^\top.
$$
Similarly, given a function $v \in C^1(R)$, the chain rule can be expressed by
\[ 
(J^{-1})^T(\varphi^{-1}(x)) \nabla_x v(x) = \nabla_y v(\varphi(y)).
\]
Let $\det J(x)$ denote the Jacobian determinant such that 
$$
\int_\Omega u(y) dy = \int_R u(\varphi(x)) |\det J(x)| dx ,
\quad \text{and} \quad
\int_R v(x) dx = \int_\Omega v(\varphi^{-1}(y)) | \det J^{-1} (y)| dy.
$$
The assumption that the singular values of $J(x)$ are contained in the interval
$[1-\varepsilon,1+\varepsilon]$ on $R$ implies that
\[
(1-\varepsilon)^d \le \det J(x) \le (1+\varepsilon)^d.
\]
Recall the following two classical inequalities involving Euler's number $e$:
$$
\left(1 + \frac{1}{x}\right)^x < e
\quad \text{and} \quad
e^{-1} <
\left(1 - \frac{1}{x} \right)^{x-1},
\quad \text{for all} \quad 
1 < x < \infty.
$$
It follows directly from Taylor's Inequality that
$$
|e^x - 1| < e^{1/10} x \quad \text{when} \quad |x| < \frac{1}{10}.
$$
Therefore, assuming that $\varepsilon d < 1/10$, it follows that
\[
(1+\varepsilon)^d = (1+\varepsilon)^\frac{\varepsilon d}{\varepsilon}
\le e^{\varepsilon d}  < 1 + e^{1/10} \varepsilon d < 1+ 2 \varepsilon d,
\]
and similarly that,
$$
(1-\varepsilon)^d = (1-\varepsilon)^{\left( \frac{1}{\varepsilon}-1 \right) 
\left(\frac{\varepsilon}{1-\varepsilon}\right) d}
> e^{-\frac{\varepsilon}{1-\varepsilon}d} > 1- e^{1/10}
\frac{\varepsilon}{1-\varepsilon}  d > 1 - 2 \varepsilon d.
$$
By combining these inequalities we conclude that  if the singular values of
$J(x)$ are contained in the interval $[1-\varepsilon,1+\varepsilon]$ on $R$, and
$0 \le \varepsilon d < 1/10$, then 
$$
1 - 2 \varepsilon d \le \det J(x) \le 1 + 2 \varepsilon d.
$$
In the following, while establishing the lemmas involved in the proof of Theorem
\ref{thm1}, we will assume that the singular values of $J(x)$ are contained in
the interval $[1-\varepsilon,1+\varepsilon]$, and will assume that
$$
1 - \delta \le \det J(x) \le 1 + \delta.
$$
Then, to complete the proof of Theorem \ref{thm1}, we will substitute $\delta =
2 \varepsilon d$. In addition to simplifying notation, controlling the
determinant and singular values of $J(x)$ separately is useful for understanding
certain classes of transformations; for example, the shear transform 
$$
(x_1,x_2) \overset{\varphi}{\mapsto} (x_1+ \alpha x_2,x_2),
$$
has arbitrarily large singular values as $\alpha \rightarrow \infty$, but has
Jacobian matrix
\[
J(x) = \left( \begin{array}{cc} 
1 & \alpha \\
0 & 1
\end{array} \right),
\]
which has determinant $1$.

\subsection{Outline of the proof of Theorem \ref{thm1}}
Let $\gtrsim$
denote $\ge$ up to multiplication by a constant equal to $1 + O(\varepsilon)$.
Recall that $\eta_j$ and $v_j$ are the Neumann Laplacian eigenvalues and
eigenfunctions on $R$, while $\mu_j$ and $u_j$ are the Neumann Laplacian
eigenvalues and eigenfunctions on $\Omega$. Observe that the condition that the
singular values of $J(x)$ are contained in the interval
$[1-\varepsilon,1+\varepsilon]$ on $R$ can be succinctly stated as
$$
\sigma \left(JJ^\top \right) \subset
\left[(1-\varepsilon)^2,(1+\varepsilon)^2 \right].
$$
The proof of Theorem \ref{thm1} is divided into three lemmas.  First, in Lemma
\ref{lem1} we will show 
$$
\eta_1 = \|\nabla_x v_1\|^2_{L^2(R)} \gtrsim \|\nabla_y( v_1 \circ \varphi^{-1}
)\|_{L^2(\Omega)}^2 \gtrsim \inf_{u \perp 1 : \|u\|=1} \|\nabla_y u\|_{L^2(\Omega)}^2 =
\mu_1,
$$
where the infimum is taken over sufficiently smooth functions of 
unit $L^2(\Omega)$-norm, which are orthogonal to constant functions on
$L^2(\Omega)$.  Second, in Lemma \ref{lem2} we will show that 
\[
\mu_1 = \|\nabla_y u_1\|_{L^2(\Omega)}^2 \gtrsim \|\nabla_x (u_1 \circ
\varphi)\|_{L^2(R)}^2 = \sum_{j=1}^\infty \eta_j \alpha_j^2,
\]
where $\alpha_j$ are the coefficients of $u_1 \circ \varphi$ expanded in the
orthogonal basis $\{v_j\}$ of Neumann eigenfunctions on $R$. Third, in Lemma
\ref{lem3} we show that  $\alpha_0^2$ is $\mathcal{O}(\varepsilon^2)$.
Finally, in \S \ref{finish} we combine these lemmas to conclude that
\[
\eta_1 \gtrsim \sum_{j=1}^\infty \eta_j \alpha_j^2,
\]
and then we use this inequality to control how large the coefficients
$\alpha_j^2$ can be. In particular, since $\eta_j$ is increasing, observe that
this inequality provides increasing control as $j$ increases.

\subsection{Lemmas involved in the proof of Theorem \ref{thm1}} \label{lemmas}

\begin{lemma} \label{lem1}
Suppose that $\sigma \left(JJ^\top \right) \subset
\left[(1-\varepsilon)^2,(1+\varepsilon)^2 \right]$ and $|\det J(x) - 1| <
\delta$ on $R$. Then,
\[
\eta_1 \frac{(1+\varepsilon)^2 (1-\delta)}{(1-\delta)^3 - (1+\delta) \delta^2}
\ge \mu_1.
\]
\end{lemma}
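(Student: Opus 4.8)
The plan is to bound $\mu_1$ from above by inserting a carefully chosen trial function into the variational characterization of the first nonconstant Neumann eigenvalue on $\Omega$,
\[
\mu_1 = \inf\left\{ \|\nabla_y u\|_{L^2(\Omega)}^2 : u \in H^1(\Omega),\ \|u\|_{L^2(\Omega)} = 1,\ \textstyle\int_\Omega u\, dy = 0 \right\}.
\]
The natural candidate is the push-forward $w := v_1 \circ \varphi^{-1}$ of the first Neumann eigenfunction on $R$. I would first record the identity $\|\nabla_x v_1\|_{L^2(R)}^2 = \eta_1$, which follows by integrating $-\Delta v_1 = \eta_1 v_1$ against $v_1$ (the Neumann condition kills the boundary term) together with $\|v_1\|_{L^2(R)} = 1$. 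Then, using the chain rule $\nabla_y w = (J^{-1})^\top \nabla_x v_1$ recorded in the preliminaries and changing variables by $y = \varphi(x)$, $dy = |\det J|\, dx$, I would write
\[
\|\nabla_y w\|_{L^2(\Omega)}^2 = \int_R \big\langle (J^\top J)^{-1} \nabla_x v_1,\, \nabla_x v_1 \big\rangle\, |\det J|\, dx.
\]
In the planar case the largest eigenvalue of $(J^\top J)^{-1}$ equals $\lambda_{\max}(J^\top J)/(\det J)^2 \le (1+\varepsilon)^2/(\det J)^2$, so the determinant coming from the change of variables partially cancels, and using $|\det J| \ge 1 - \delta$ this numerator is at most $\tfrac{(1+\varepsilon)^2}{1-\delta}\,\eta_1$; this is precisely the mechanism that produces the factor $(1+\varepsilon)^2$ in the statement.

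The crucial subtlety is that $w$ is not admissible in the variational problem as it stands: although $\int_R v_1\, dx = 0$, the change of measure forces
\[
\int_\Omega w\, dy = \int_R v_1\, |\det J|\, dx = \int_R v_1\, (|\det J| - 1)\, dx,
\]
which does not vanish in general. I would therefore test against the mean-corrected function $\tilde w := w - |\Omega|^{-1}\int_\Omega w\, dy$, which is orthogonal to constants and has the same gradient as $w$, so that $\mu_1 \le \|\nabla_y w\|_{L^2(\Omega)}^2 / \|\tilde w\|_{L^2(\Omega)}^2$. It then remains to bound the denominator $\|\tilde w\|_{L^2(\Omega)}^2 = \|w\|_{L^2(\Omega)}^2 - |\Omega|^{-1}\big(\int_\Omega w\big)^2$ from below. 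Here $\|w\|_{L^2(\Omega)}^2 = \int_R v_1^2\, |\det J|\, dx \ge 1 - \delta$, while by Cauchy--Schwarz and $|R| = 1$ the displayed mean obeys $|\int_\Omega w| \le \delta$, so that $(\int_\Omega w)^2 \le \delta^2$ and $|\Omega|^{-1} \le (1-\delta)^{-1}$. Substituting the numerator and denominator estimates into the Rayleigh quotient and clearing the nested factors of $(1-\delta)$ and $(1+\delta)$ yields the stated rational bound $\mu_1 \le \eta_1\,(1+\varepsilon)^2(1-\delta)/\big((1-\delta)^3 - (1+\delta)\delta^2\big)$.

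I expect the main obstacle to be the mean-correction step rather than the gradient estimate. The gradient comparison is a routine application of the chain rule and the spectral bounds on $J$. What genuinely requires care is ensuring that the trial function does not degenerate once constants are projected off, i.e. that $\|\tilde w\|_{L^2(\Omega)}^2$ remains bounded away from zero, and in fact stays close to $1$. This is exactly where the volume hypothesis $|\det J - 1| < \delta$ is essential beyond the singular-value bound: it is the near volume preservation of $\varphi$ that keeps the induced mean of $w$ of order $\delta$, prevents cancellation in the denominator, and accounts for all of the $\delta$-dependence in the stated constant.
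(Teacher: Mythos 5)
Your proposal is correct and is essentially the paper's own argument: both insert the mean-corrected push-forward $v_1\circ\varphi^{-1}-c$ into the minimax characterization of $\mu_1$, bound the Dirichlet energy of the trial function by $\eta_1(1+\varepsilon)^2/(1-\delta)$ via the chain rule and change of variables, and bound the induced mean and the $L^2$ norm of the trial function using $|\det J-1|<\delta$, $\int_R v_1\,dx=0$, and Cauchy--Schwarz (your denominator bound is in fact slightly sharper than the paper's, so it implies the stated inequality). One caveat worth noting: your derivation of the factor $(1+\varepsilon)^2/(1-\delta)$ rests on the identity $\lambda_{\max}\bigl((J^\top J)^{-1}\bigr)=\lambda_{\max}(J^\top J)/(\det J)^2$, which is special to $d=2$; in general dimension the direct estimate $\lambda_{\min}(J^\top J)\ge(1-\varepsilon)^2$ yields $(1+\delta)/(1-\varepsilon)^2$ instead, which agrees with the stated constant only to first order in $\varepsilon,\delta$ --- but the paper's own proof (which cites bounds on $\sigma_{\max}(J)$ and $\det J$) glosses over exactly the same point, so your planar identity is, if anything, the cleaner justification of the stated constant in the two-dimensional setting of the application.
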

\begin{proof}
Multiplying
the equation $\Delta v_1 + \eta_1 v_1 = 0$ by $v_1$, integrating over $R$, and
applying Green's Identity yields
\[
\eta_1 = \| \nabla_x v_1 \|_{L^2(R)}^2,
\]
which by the chain rule, and a change of variables of integration is equivalent
to
\[
\eta_1 = \| J^T \circ \varphi^{-1} \nabla_y \circ \varphi^{-1} \sqrt{|\det
J^{-1}|} \|_{L^2(\Omega)}^2. 
\]
Therefore, by the assumed bounds for $\sigma_\text{max}(J)$ and $\det J$, we
conclude that
\[
\eta_1  \frac{(1+\varepsilon)^2}{1-\delta} \ge \| \nabla_y v_1 \circ
\varphi^{-1} \|_{L^2(\Omega)}^2.
\]
By the minimax principle, we have
\[
\mu_1 = \inf_{u \perp 1} \frac{ \| \nabla_y u
\|^2_{L^2(\Omega)}}{\|u\|_{L^2(\Omega)}^2},
\]
where the infimum is taken over differentiable functions $u$, which are
orthogonal to constant functions on $L^2(\Omega)$. It follows that
\[ 
\frac{\| \nabla (v_1 \circ \varphi^{-1} -c) \|_{L^2(\Omega)}^2}{\|v_1
\circ \varphi^{-1} -c \|_{L^2(\Omega)}^2} \ge \mu_1,
\]
where $c$ is a constant such that the function $v_1 \circ \varphi^{-1} -c $ is
orthogonal to constant functions on $L^2(\Omega)$, i.e.,
\[ 
c = \frac{1}{\|\Omega|} \int_\Omega v_1 \circ \varphi^{-1} dx .
\]
Now, combining this inequality with the previous inequality comparing
$\eta_1$ and $\| \nabla_y v_1 \circ \varphi^{-1}\|_{L^2(\Omega)}^2$ yields 
\[
\eta_1 \frac{(1 + \epsilon)^2}{(1 - \delta)} \frac{1}{\|v_1 \circ \varphi^{-1} -
c \|_{L^2(\Omega)^2}^2} \ge 
\frac{ \|\nabla_y (v_1 \circ \varphi^{-1} - c) \|_{L^2(\Omega)}^2}{\|v_1 \circ
\varphi^{-1} - c \|_{L^2(\Omega)}^2} \ge \mu_1.
\]
To complete the proof, it remains to compute a lower bound on $\| v_1 \circ
\varphi^{-1} - c\|_{L^2(\Omega)}^2$. The constant $c$ can be computed by taking
the inner product of $v_1 \circ \varphi^{-1}$ with the constant function
$|\Omega|^{-1}$, specifically,
\begin{eqnarray*}
c = |\Omega|^{-1} \int_{\Omega} v_1 ( \varphi^{-1}(y)) dy 
&=& |\Omega|^{-1} \int_R v_1(x) | \det J(x)| dx  \\
&\le&  \delta |\Omega|^{-1} \int_R |v_1(x)|dx 
\le \delta |\Omega|^{-1} |R|
\le \frac{\delta}{1-\delta}.
\end{eqnarray*}
Since $v_1 \circ \varphi^{-1} - c$ is orthogonal to constant functions in
$L^2(\Omega)$, 
\begin{eqnarray*}
\|v_1 \circ \varphi^{-1} - c\|^2_{L^2(\Omega)} 
&=& \| v_1 \circ \varphi^{-1} \|_{L^2(\Omega)} - \|c\|^2_{L^(\Omega)} \\
&\ge& \int_R |v_1(x)|^2 |\det J(x)| dx - |\Omega| \left( \frac{\delta}{1-\delta}
\right)^2  \\
&\ge& (1-\delta) - (1+\delta) \frac{\delta^2}{(1-\delta)^2} \\
&=& \frac{(1-\delta)^3 - (1+\delta) \delta^2}{(1-\delta)^2}.
\end{eqnarray*}
Substituting this lower bound into our previous inequality yields:
\[
\eta_1 \frac{(1+\varepsilon)^2 (1-\delta)}{(1-\delta)^3 - (1+\delta) \delta^2}
\ge \mu_1.
\]
\end{proof}
\begin{lemma} \label{lem2}
Suppose that $\sigma \left(JJ^\top \right) \subset
\left[(1-\varepsilon)^2,(1+\varepsilon)^2 \right]$ and $|\det J(x) - 1| <
\delta$ on $R$. Then,
$$
\mu_1 \frac{(1+\delta)}{(1-\varepsilon)^2} \ge \sum_{j=1}^\infty \eta_j
\alpha_j^2,
$$
where $\alpha_j$ are the coefficients of $u_1 \circ \varphi$ expanded in the
orthogonal basis of eigenfunctions $\{v_j\}$ on $L^2(R)$.
\end{lemma}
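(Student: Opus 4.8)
The plan is to run the three-step chain indicated in the outline: represent $\mu_1$ as a Dirichlet energy on $\Omega$, transport that energy back to $R$ through $\varphi$, and then read off the coefficients $\alpha_j$ via a Parseval identity for the Dirichlet form. I would begin exactly as in Lemma \ref{lem1}: multiplying $\Delta u_1 + \mu_1 u_1 = 0$ by $u_1$, integrating over $\Omega$, and invoking Green's identity, the boundary term drops because $\partial u_1/\partial n = 0$ on $\partial\Omega$, and since $\|u_1\|_{L^2(\Omega)} = 1$ this gives the representation $\mu_1 = \|\nabla_y u_1\|_{L^2(\Omega)}^2$.

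The geometric core is to compare this with the Dirichlet energy of the pullback $f := u_1\circ\varphi$ on $R$. By the chain rule $\nabla_x f(x) = J^\top(x)\,(\nabla_y u_1)(\varphi(x))$, so after the substitution $y = \varphi(x)$,
$$
\|\nabla_x f\|_{L^2(R)}^2 = \int_\Omega \bigl| J^\top(\varphi^{-1}(y))\,\nabla_y u_1(y)\bigr|^2\,|\det J^{-1}(y)|\;dy.
$$
Since $|J^\top w|^2 = w^\top (JJ^\top) w$, the hypothesis $\sigma(JJ^\top)\subset[(1-\varepsilon)^2,(1+\varepsilon)^2]$ bounds the integrand pointwise by $(1+\varepsilon)^2 |\nabla_y u_1|^2$, while $|\det J - 1| < \delta$ controls the Jacobian factor via $|\det J^{-1}| \le (1-\delta)^{-1}$; together these produce
$$
\|\nabla_x f\|_{L^2(R)}^2 \le \mu_1\,\frac{(1+\varepsilon)^2}{1-\delta},
$$
a bound of the form $\mu_1\,(1 + O(\varepsilon+\delta))$ that agrees with the claimed factor to leading order. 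This is mechanically the reverse of the comparison carried out in Lemma \ref{lem1}, and the only delicate point is the bookkeeping: keeping track of which extreme eigenvalue of $JJ^\top$ and which bound on $\det J$ push the inequality in the direction I want.

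Finally, I would identify $\|\nabla_x f\|_{L^2(R)}^2$ with $\sum_{j=1}^\infty \eta_j\alpha_j^2$. Expanding $f = \sum_{j=0}^\infty \alpha_j v_j$ in the orthonormal Neumann eigenbasis of $L^2(R)$, integration by parts against $-\Delta v_i = \eta_i v_i$ together with the Neumann condition yields the Dirichlet-form orthogonality $\int_R \nabla v_i\cdot\nabla v_j\,dx = \eta_i\,\delta_{ij}$, whence $\|\nabla_x f\|_{L^2(R)}^2 = \sum_{j=0}^\infty \eta_j\alpha_j^2 = \sum_{j=1}^\infty \eta_j\alpha_j^2$, the $j=0$ term vanishing since $\eta_0 = 0$. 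I expect the genuine technical obstacle to lie here rather than in any single estimate: the Parseval identity requires justifying term-by-term differentiation and the convergence of $\sum_j \eta_j\alpha_j^2$, which I would ground in the fact that $f\in H^1(R)$ — this holds because $u_1\in H^1(\Omega)$ and $\varphi$ is bi-Lipschitz, its singular values lying in $(1-\varepsilon,1+\varepsilon)$ — combined with the spectral theorem for the Neumann Laplacian on $R$.
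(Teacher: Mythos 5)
Your proof is correct and takes essentially the same route as the paper's: express $\mu_1 = \|\nabla_y u_1\|_{L^2(\Omega)}^2$ via Green's identity, transport this Dirichlet energy to $R$ by the chain rule and a change of variables using the bounds on $\sigma(JJ^\top)$ and $\det J$, and identify $\|\nabla_x(u_1 \circ \varphi)\|_{L^2(R)}^2 = \sum_{j\ge 1}\eta_j \alpha_j^2$ through the Dirichlet-form orthogonality $\int_R \nabla v_i \cdot \nabla v_j\,dx = \eta_i \delta_{ij}$. One remark on the constant: your bookkeeping yields $(1+\varepsilon)^2/(1-\delta)$ rather than the stated $(1+\delta)/(1-\varepsilon)^2$; these agree to first order in $\varepsilon$ and $\delta$, and in fact a careful application of the bounds (what is needed is $\sigma_{\max}(J) \le 1+\varepsilon$ together with the lower bound $\det J \ge 1-\delta$, not $\sigma_{\min}(J)$ and the upper determinant bound as the paper's proof suggests) produces exactly your constant, so the discrepancy reflects a slip in the lemma's stated factor rather than a gap in your argument -- and it is harmless downstream, since the proof of Theorem \ref{thm1} absorbs all such factors into the unoptimized constant $20$.
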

\begin{proof}
First, we express $\mu_1$ in terms of an integral of the function $\mu_1 \circ
\varphi$ over $R$,
$$
\mu_1 = \left\| (J^{-1})^T \circ \varphi \nabla_x (u \circ \varphi) \sqrt{|\det
J|} \right\|_{L^2(R)}^2.
$$
This construction is symmetric to our equation for $\eta_1$ in terms of an
integral of $v_1 \circ \varphi^{-1}$ over $\Omega$. Applying the bounds
on $\sigma_\text{min}(J)$ and $\det J$ yields
\[
\mu_1 \frac{(1+\delta)}{(1-\varepsilon)^2} \ge \left\|\nabla_x (u_1 \circ
\varphi) \right\|_{L^2(R)}^2.
\]
Since the Neumann Laplacian eigenfunctions $\{v_j\}$ form a orthonormal basis of
$L^2(R)$, we can expand 
\[
u_1 \circ \varphi(x) = \sum_{j=0}^\infty \alpha_j v_j(x),
\]
where
\[
\alpha_j = \int_R u_1(\varphi(x)) v_j(x) dx.
\]
By Green's first identity, and the orthogonality of the eigenfunctions
$\{v_j\}$, we have
\[
\left\| \nabla_x \sum_{j=1}^\infty \alpha_j v_j \right\|_{L^2(R)}^2 =
\sum_{j=1}^\infty \eta_j \alpha_j^2.
\]
Combining the previously developed inequalities and expanding $u_1 \circ
\varphi$ in $\{v_j\}$ establishes the inequality
\[
\mu_1 \frac{(1+\delta)}{(1-\varepsilon)^2} \ge \left\|\nabla_x (u_1 \circ
\varphi) \right\|_{L^2(R)}^2
 \ge \sum_{j=1}^\infty \eta_j
\alpha_j^2,
\]
where $\alpha_j$ are the coefficients of $u_1 \circ \varphi$ expanded in the
orthogonal basis of eigenfunctions $\{v_j\}$ on $L^2(R)$.
\end{proof}
Observe that the sum in the inequality that we establish excludes the term
$\eta_0 \alpha_0^2$, as $\eta_0 = 0$.  Therefore, our inequality offers no
control over $\alpha_0^2$. However, since $u_1$ is orthogonal to constants on
$L^2(\Omega)$, we suspect $u_1 \circ \varphi$ is nearly orthogonal to constants
on $L^2(R)$. In the following lemma, we formalize this intuition, and develop
a precise bound for $\alpha_0^2$.

\begin{lemma} \label{lem3}
Suppose that $|\det J(x) - 1| < \delta$ on $R$. Then,
\[
\alpha_0^2 \le \frac{\delta^2 (1+\delta)^2}{(1-\delta)^2}.
\]
\end{lemma}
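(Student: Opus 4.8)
The key is that the zeroth coefficient is special for two reasons, and the plan is to play them against each other. First, because $R$ has unit measure, the normalized constant eigenfunction is simply $v_0 \equiv 1$, so that
$$
\alpha_0 = \int_R u_1(\varphi(x))\, v_0(x)\, dx = \int_R u_1(\varphi(x))\, dx .
$$
Second, $u_1$ is orthogonal to constants on $L^2(\Omega)$, and after a change of variables this orthogonality reads
$$
0 = \int_\Omega u_1(y)\, dy = \int_R u_1(\varphi(x)) \, |\det J(x)|\, dx .
$$
Subtracting the second identity from the first expresses $\alpha_0$ as an integral against $1 - |\det J|$, which the hypothesis $|\det J - 1| < \delta$ forces to be small. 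This is precisely the device used to bound the constant $c$ in the proof of Lemma \ref{lem1}, so the estimate is the mirror image of one already carried out.

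Concretely, I would write
$$
\alpha_0 = \int_R u_1(\varphi(x))\bigl(1 - |\det J(x)|\bigr)\, dx,
\qquad\text{so that}\qquad
|\alpha_0| \le \delta \int_R |u_1(\varphi(x))|\, dx .
$$
It then remains to bound the $L^1(R)$ norm of $u_1 \circ \varphi$. Using $|\det J| \ge 1 - \delta$, a change of variables, the Cauchy--Schwarz inequality, and the normalization $\|u_1\|_{L^2(\Omega)} = 1$ in sequence gives
$$
\int_R |u_1 \circ \varphi|\, dx
\le \frac{1}{1-\delta} \int_R |u_1 \circ \varphi|\, |\det J|\, dx
= \frac{1}{1-\delta}\, \|u_1\|_{L^1(\Omega)}
\le \frac{|\Omega|^{1/2}}{1-\delta} .
$$
Finally $|\Omega| = \int_R |\det J|\, dx \le 1 + \delta \le (1+\delta)^2$, so $|\Omega|^{1/2} \le 1 + \delta$, and combining the displays yields
$$
\alpha_0^2 \le \frac{\delta^2 (1+\delta)^2}{(1-\delta)^2} ,
$$
as claimed.

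I do not expect a genuine obstacle in this lemma; the argument is short and elementary. The only points demanding care are bookkeeping --- keeping straight which of the two integrals carries the $|\det J|$ weight --- and the invocation $v_0 \equiv 1$, where the unit-measure normalization of the reference domain $R$ is essential. A slightly tighter estimate (for instance, applying Cauchy--Schwarz directly in $L^2(R)$ with $|R| = 1$, or using a weighted Cauchy--Schwarz) would in fact give the stronger bound $\alpha_0^2 \le \delta^2/(1-\delta)$; the more conservative chain above is presented because it reproduces the stated constant exactly.
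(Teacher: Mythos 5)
Your proposal is correct and takes essentially the same approach as the paper: both express $\alpha_0$ as $\int_R u_1(\varphi(x))\,dx$, invoke the orthogonality of $u_1$ to constants on $L^2(\Omega)$ so that only the deviation of the Jacobian determinant from $1$ contributes, and finish with Cauchy--Schwarz and $|\Omega| \le 1+\delta$. The only cosmetic difference is that the paper changes variables so the estimate runs over $\Omega$, where the deviation $\bigl| |\det J^{-1}| - 1 \bigr| \le \delta/(1-\delta)$ appears, whereas you keep the integral over $R$, where the deviation is bounded by $\delta$ and the factor $1/(1-\delta)$ enters later; both routes give the identical constant (and your write-up has the minor virtue of making the use of orthogonality explicit, which the paper leaves implicit).
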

\begin{proof}
By definition,
$$
\alpha_0^2 = \left( \int_R u_1(\varphi(x)) dx \right)^2
= \left( \int_\Omega u_1(y) |\det J^{-1}(y)| dy \right)^2.
$$
Moreover, we have
$$
\left( \int_\Omega u_1(y) |\det J^{-1}(y)| dy \right)^2 
\le \left( \frac{\delta}{1 - \delta} |\Omega| \right)^2
\le \frac{\delta^2 (1+\delta)^2}{(1-\delta)^2},
$$
which completes the proof.
\end{proof}

\subsection{Proof of Theorem \ref{thm1}} \label{finish}
In the following, we combine Lemmas \ref{lem1}, \ref{lem2}, and \ref{lem3} to
prove Theorem \ref{thm1}.

\begin{proof}[Proof of Theorem \ref{thm1}]
By combining the results of Lemma \ref{lem1} and \ref{lem2}, we conclude that
\[
\eta_1 E_{\delta,\varepsilon} \ge \sum_{j=1}^\infty \eta_j \alpha_j^2,
\]
where 
\[
E_{\delta,\varepsilon} = \frac{(1+\varepsilon)^2}{(1-\varepsilon)^2}
\frac{(1+\delta)(1-\delta)}{(1-\delta)^3 - (1+\delta) \delta^2}.
\]
Subtracting $\sum_{j=1}^{k} \eta_j \alpha_j^2$ from each side of this inequality
yields
\[
\eta_1 E_{\varepsilon,\delta}  - \sum_{j=1}^{k} \eta_j \alpha_j^2 \ge
\sum_{j={k+1}}^\infty \eta_j \alpha_j^2 .
\]
Then, since the eigenvalues $\eta_1 \le \eta_2 \le \eta_3 \le \cdots$ are in
ascending order, we conclude that
$$
\eta_1 \left(E_{\varepsilon,\delta} - \sum_{j=1}^{k} \alpha_j^2 \right) \ge
\eta_{k+1} \sum_{j=k+1}^\infty \alpha_j^2.
$$
By the inequality $1/(1+\delta) \le \|u_1 \circ \varphi\|_{L^2(R)} =
\sum_{j=0}^\infty \alpha_j^2$, it follows that
\[
\eta_1 \left( E_{\varepsilon,\delta}  - \left(\frac{1}{1+\delta} -
\sum_{j=k}^\infty \alpha_j^2 - \alpha_0 \right) \right) \ge \eta_{k+1}
\sum_{j=k+1}^\infty \alpha_j^2.
\]
Rearranging  and collecting terms gives
\[
\frac{\eta_1 F_{\varepsilon,\delta}}{\eta_{k+1} - \eta_1} \ge
\sum_{j=k+1}^\infty \alpha_j^2,
\]
where
\[
F_{\varepsilon,\delta} = \frac{(1+\varepsilon)^2}{(1-\varepsilon)^2} 
\frac{(1+\delta)(1-\delta)}{(1-\delta)^3 + (1+\delta) \delta^2} +
\frac{\delta^2(1+\delta)^2}{(1-\delta)^2} - \frac{1}{1+\delta}.
\]
Let $\delta = 2 \varepsilon d$. Using Lemma \ref{lem3}, power series expansions,
and assuming $\varepsilon d < 1/10$ gives
\[
20  \frac{\eta_1  \varepsilon  d}{\eta_{k+1} - \eta_1} 
+ \varepsilon d
\ge
\alpha_0^2 + \sum_{j=k+1}^\infty \alpha_j^2 ,
\]
as was to be shown.
\end{proof}
\section{Applications} \label{sec:application}
\subsection{Problem} In the field of geophysical interpretation, the problem of
organizing the layers of a seismic image, otherwise known as seismic flattening,
is of interest \cite{Lomask:2007,Gao:2009,Parks:2010}. Given a
seismic image, the objective is to modify the depth function of the seismic
image such that each layer has constant depth.  Since each layer was deposited
at roughly the same time, the depth function which flattens the layers is
sometimes referred to as geological time. In general, a seismic image consists
of an $m \times n \times l$ real-valued tensor whose first coordinate is
referred to as depth.  As the depth increases, the values in the tensor
oscillate between positive and negative values, but in a nonuniform way.  For
example in Figure \ref{fig:flattening} a two dimensional slice of a
three dimensional seismic image is plotted using a color map, which exhibits
this oscillating behavior. 
\begin{figure}[h!]
\centering
\includegraphics[width=0.47\textwidth]{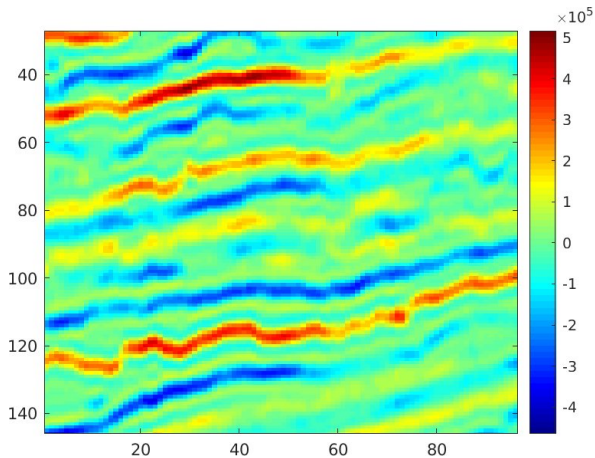}
\caption{Example of a two dimensional slice of a seismic image tensor.}
\label{fig:flattening}
\end{figure}

In the following, we will primarily restrict our
attention to such two dimensional slices of seismic tensors, except 
for the filtering process on the data, which does incorporate the pixels
surrounding our two dimensional slice in the tensor.

\subsection{Approach Outline} We approach the problem of seismic flattening from
a diffusion geometry perspective. Given a seismic image, we construct a
diffusion process whose eigenfunctions encode the geometric features of the data
that are of interest, which, in this case, are the layers of the seismic image.
Our method has three main steps:
\begin{enumerate}
\item \textbf{Adaptive Filtering.} A Principal Component Analysis filtering
technique is used to associate filtered feature vectors to each pixel in the
two dimensional seismic image. These filtered features incorporate
information from the surrounding pixels. 
\item \textbf{Kernel Construction.} A diffusion kernel is defined, which
propagates rapidly along the layers of the seismic image and
slowly perpendicular to these layers. This kernel is defined using the filtered
feature vectors.
\item \textbf{Layer Organization.} The first eigenfunction of the diffusion
operator (which should resemble the first Neumann Laplacian eigenfunction on the
intrinsic underlying domain that is roughly shaped like a tall thin rectangle)
is then used to organize the layers of the image.
\end{enumerate}

Intuitively, since the constructed diffusion process propagates rapidly along
the layers of the seismic image, and slowly perpendicular to the layers, if the
diffusion operator is taken to a sufficient power, diffusion starting at
a single point on a layer will completely propagate along the layer, with very
little propagation perpendicular to the layer. This powered diffusion operator
essentially replaces functions with their average along the layers of the
seismic image. However, the eigenfunctions of the original diffusion operator
and the powered diffusion operator are the same. Therefore, the first nontrivial
eigenfunction of the diffusion operator must be essentially equal to its
average on the layers of the image. That is to say, the first eigenfunction must
be essentially constant on the layers of the image. A similar idea was used by
Lafon, see Proposition 12 on page 47 of \cite{Lafon:2004}, where an anisotropic
diffusion process is used to study differential systems.

\subsection{Main idea}
 A complete description of the proposed method is provided in \ref{sec:details}.
Here, we focus on explaining the kernel construction, which is the main idea of
the proposed method. Suppose that the filtered feature vectors $f(i)$, which
summarizes the structure of the seismic image at $i$, have already been computed
using the method described in \ref{sec:details}. For each pixel $i$ we consider
two spatial neighborhoods: a calibration neighborhood $C_R$, and a propagation
neighborhood $N_r$, where $0 < r < R$ denotes the radius of each neighborhood.

\begin{figure}[h!]
\centering
\includegraphics[width=.3\textwidth]{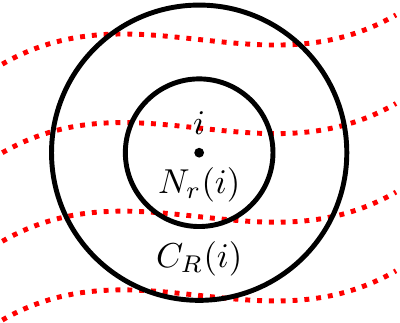}
\caption{Illustration of propagation and calibration neighborhood.}
\label{fig06} 
\end{figure}
The calibration neighborhood $C_R(i)$ determines the level of variation of the
filtered feature vectors around pixel $i$. Specifically, we define:
\[
M(i) = \max_{j \in C_R(i)} \|f(i) - f(j)\|_2^2 .
\]
We will use this function $M(i)$ to account for the differing levels of
variation across the seismic image. The propagation neighborhood $N_r(i)$
determines the support of the diffusion operator. Specifically, we define the
kernel
$$
W(i,j) = 
\left\{
\begin{array}{cl}
\exp \left( - \frac{\|f(i) - f(j)\|_2^2}{\varepsilon M(i)} \right)
&
\text{if}  \quad j \in N_r(i)  \\
0 & \text{if} \quad j \not \in N_r(i), 
\end{array} \right.
$$
where $\varepsilon > 0$ is a parameter.
Restricting the propagation of the diffusion process to the small neighborhood
$N_r$ ensures the resulting kernel will be sparse, and hence computationally
efficient to work with.  By appropriately choosing the parameter $\varepsilon >
0$, we can control how much the truncation to $N_r(i)$ effects the diffusion
operator.  
The filtered features are designed in such a way that they are
relatively constant when a pixel is translated along a layer, and differ
greatly
as a pixel moves perpendicular to a layer (The pixel values themselves also have
this property, but with more noise).  Therefore, by choosing the parameters,
$R$, $r$, and $\varepsilon$ are appropriately, the diffusion process will
strongly prefer to travel along the level lines of the image, and be strongly
penalized for traveling perpendicular to these level lines. For this constructed
diffusion kernel, the underlying domain on which the diffusion kernel
corresponds to isotropic diffusion must be very narrow and tall, hence the
connection to Theorem \ref{thm1}. The precise details of the kernel
construction are included in \ref{sec:details}. 

\subsection{Example}  \label{examplesec}
In this section we will illustrate each step of the
described method. 
\subsubsection{Filtered Features} First, each pixel of the selected two
dimensional slice is associated with a filtered feature vector $f(i)$, which
incorporates information
from the pixels surrounding $i$ in the seismic tensor. The filtered feature
vector summarizes the local structure of the seismic image in a smooth
way. In Figure \ref{fig:compare} a two dimensional slice of the seismic
image is plotted verses one of the coordinates of the filtered feature
vector $f(i)$. This data adaptive filtering process serves to remove local noise
and incorporate some of the surrounding
structures.
\begin{figure}[h!]
\centering
\begin{tabular}{cc}
\includegraphics[width=0.47\textwidth]{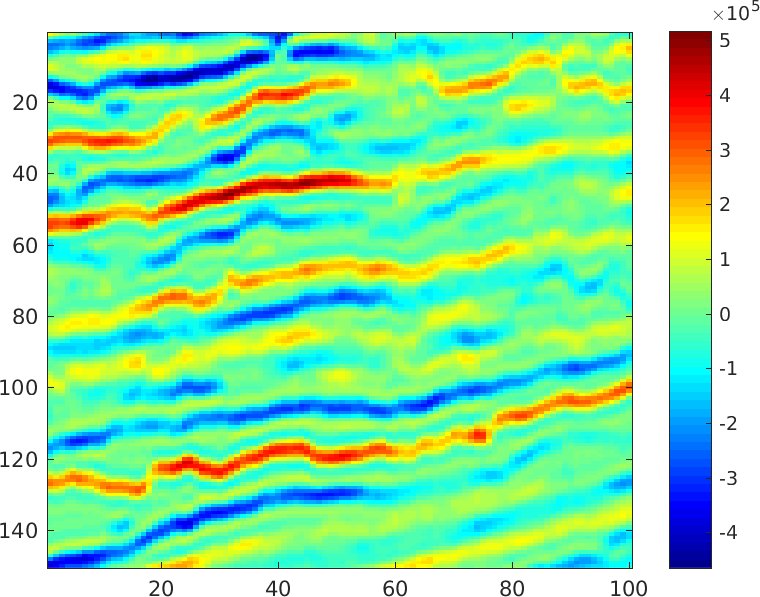} &
\includegraphics[width=0.47\textwidth]{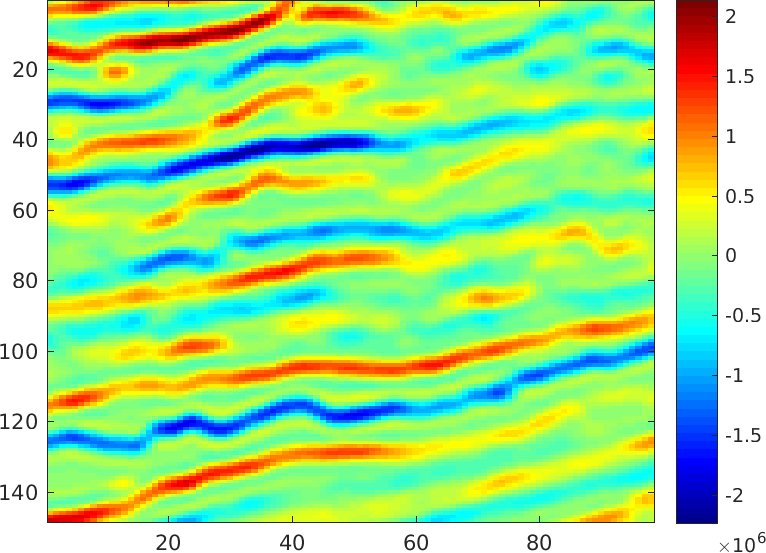}
\end{tabular}
\caption{Seismic image (left), and coordinate of filtered feature vector
(right).}
\label{fig:compare}
\end{figure}

\subsubsection{Diffusion operator eigenfunctions} Next, we define the kernel $W(i,j)$ using the filtered feature vectors and
normalized $W(i,j)$ appropriately to define a diffusion operator $P(i,j)$. The
first two (nontrivial) eigenvectors of $P$ are are plotted in Figure
\ref{fig:eig}. As expected the first eigenvector is approximately
monotone, while the second eigenvector resembles a higher order oscillation. The
subsequent eigenvectors additionally contain oscillations in the horizontal
direction. Therefore, the underlying domain corresponds to some tall rectangular
domain whose height is approximately $2-3$ times its width.
%
%
\begin{figure}[h!]
\centering
\begin{tabular}{cc}
\includegraphics[width=0.47\textwidth]{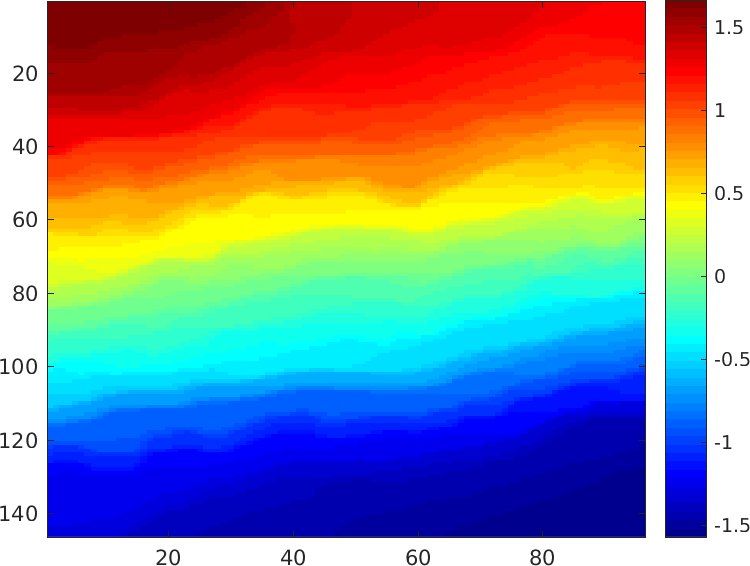} &
\includegraphics[width=0.47\textwidth]{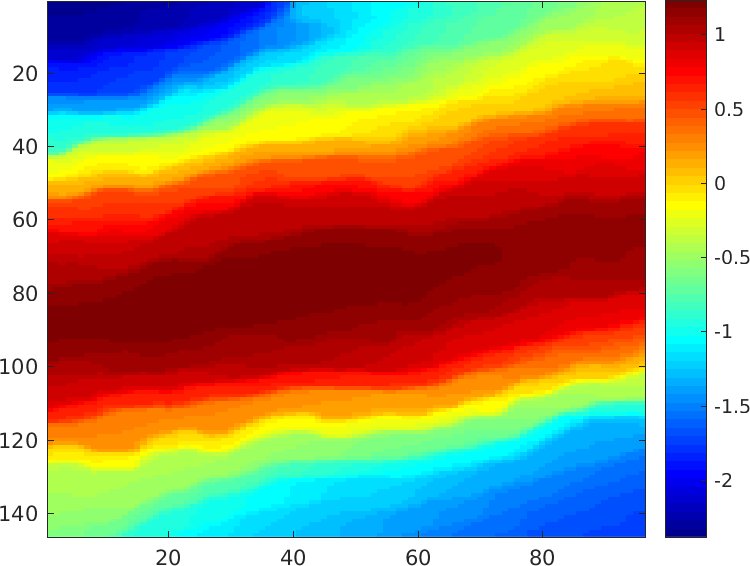}
\end{tabular}
\caption{First (left) and second (right) eigenfunction of the diffusion operator
$P$.}
\label{fig:eig}
\end{figure}
\subsubsection{Flattening} Given the first eigenfunction of the diffusion operator $P$, we can recover
the depth function by assuming this eigenfunction coincides with the first
Neumann Laplacian eigenfunction on some tall thin rectangle. The result of this
flattening procedure is plotted in Figure  \ref{fig:flattening2}.
%
\begin{figure}[h!]
\centering
\begin{tabular}{cc}
\includegraphics[width=0.47\textwidth]{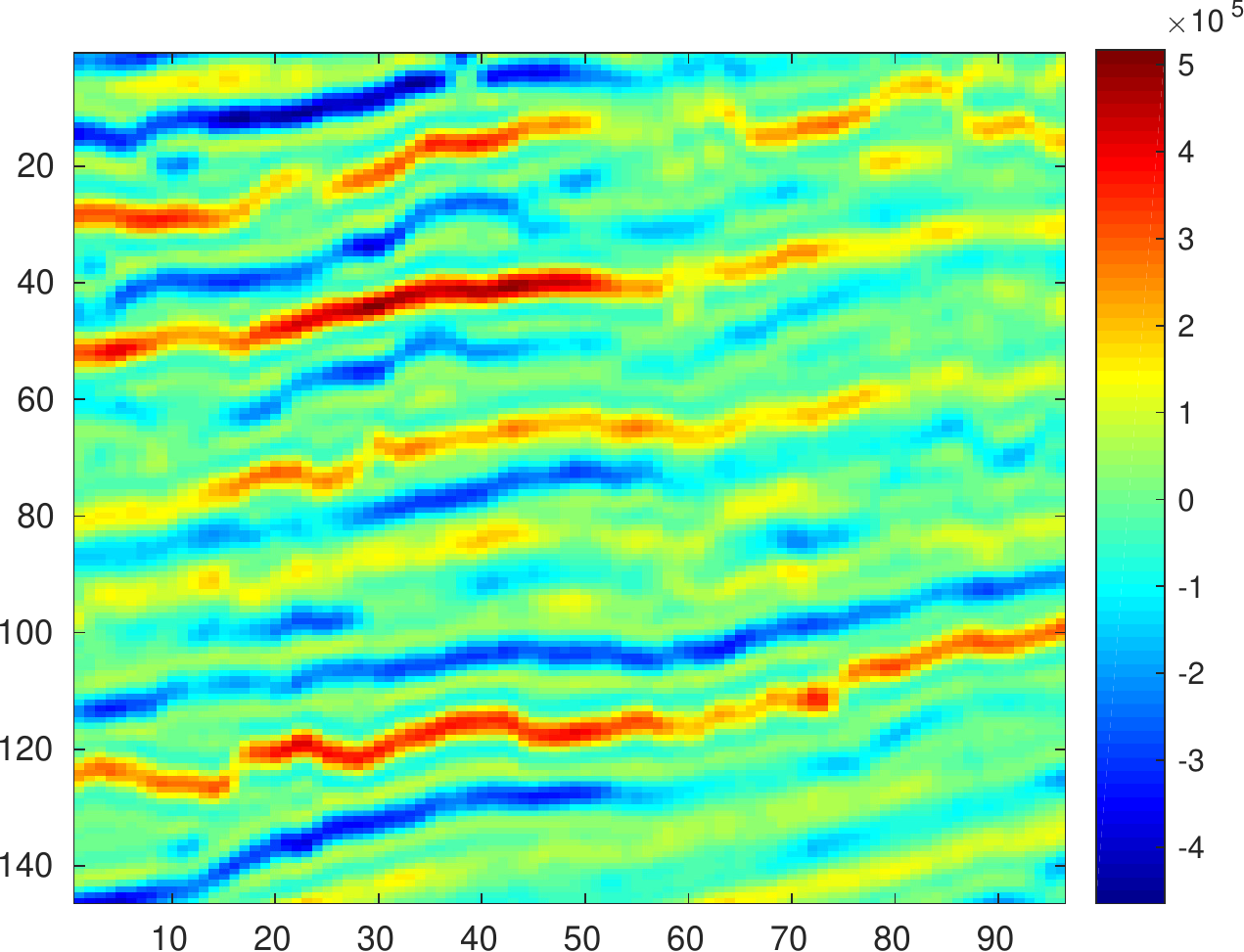} &
\includegraphics[width=0.47\textwidth]{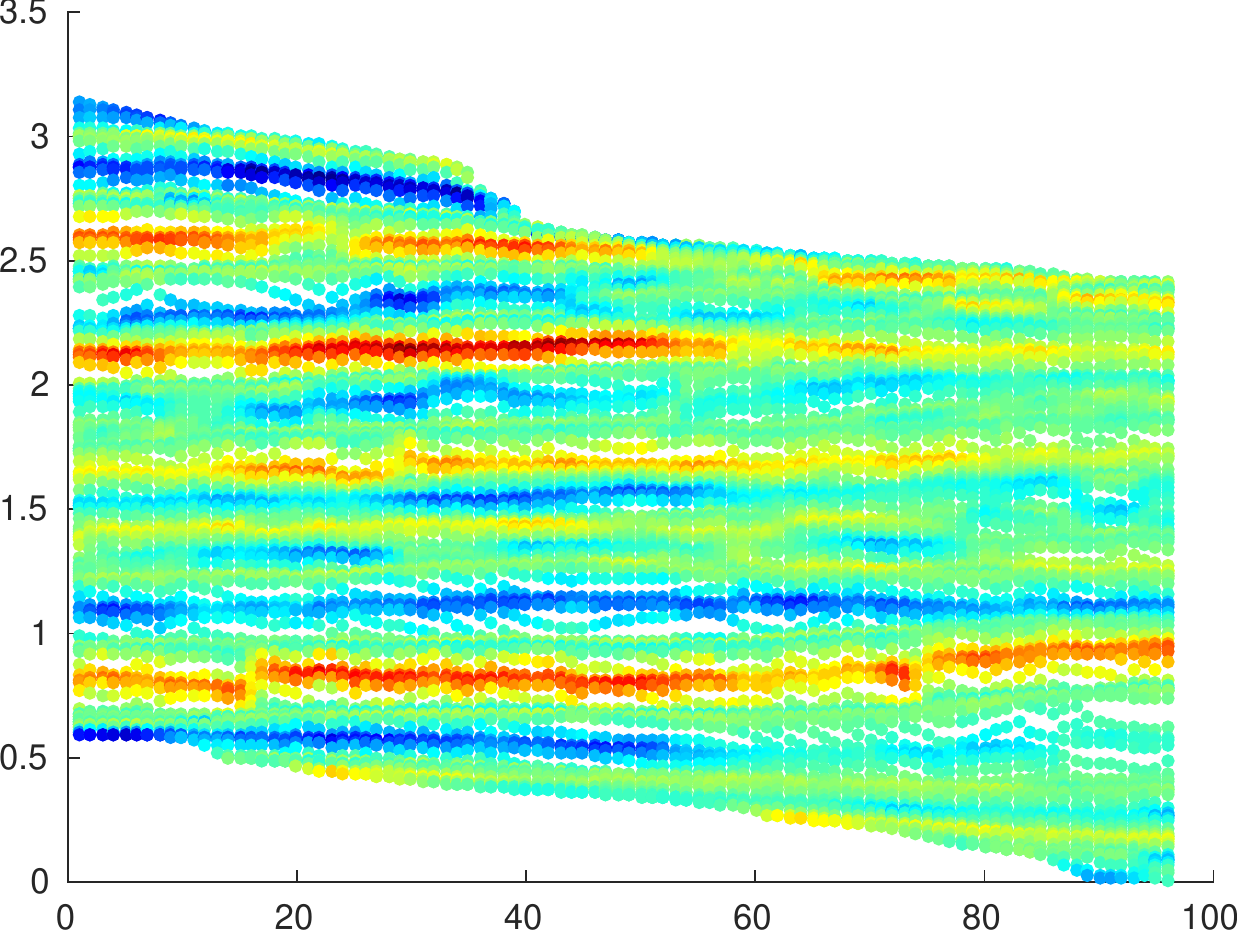} 
\end{tabular}
\caption{Seismic image (left), and result of diffusion flattening (right).}
\label{fig:flattening2}
\end{figure}
\subsubsection{Magnified flattening} Comparing the scatter plot to the original seismic image in Figure
\ref{fig:flattening2} it is evident that the layers are already fairly flat.
However, at this resolution, the markers in the scatter plot are overlapping. By
zooming into the scatter plot and the corresponding area in the seismic image,
we can see that the description generated by the reparameterization is in fact
far richer than simply flattening. In Figure \ref{fig:zoom}, we zoom into the
interval $[90,130]$ on the $y$-axis of the seismic image in  Figure
\ref{fig:flattening2}, and we zoom into a corresponding interval on the $y$-axis
of the reparameterized image.  At this new level of zoom white space appears
between the scatter plot markers.  This white space is simply a result of the
markers diminishing in size, and demonstrates that the diffusion flattening
procedure offers an advantageous side effect: the points within a layer are
automatically clustered when the depth function is reparameterized by the first
eigenfunction of the diffusion operator.
%
%
\begin{figure}[ht!]
\centering
\begin{tabular}{cc}
\includegraphics[width=0.47\textwidth]{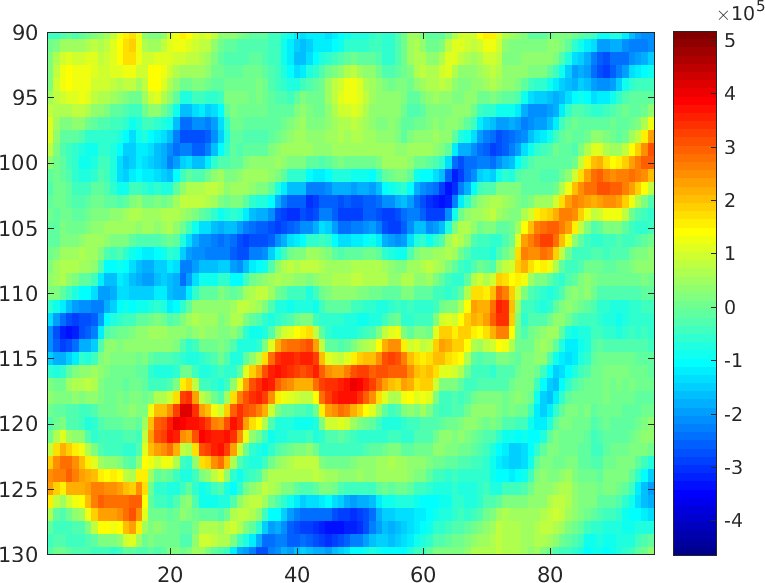} &
\includegraphics[width=0.47\textwidth]{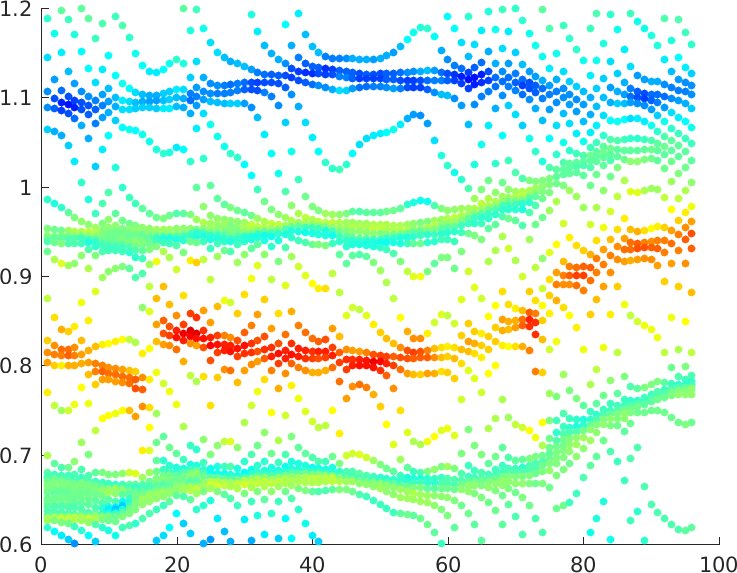}
\end{tabular}
\caption{Seismic image with $y$-axis zoomed into $90$ to $130$, and
corresponding region of flattened image. At this zoom level, the
detailed structure of the diffusion layer organization becomes evident.}
\label{fig:zoom}
\end{figure}
\subsubsection{Example Summary} Given a two dimensional seismic image we
have defined a diffusion process on the image using structural information from
the surrounding pixels. This diffusion processes propagates rapidly
along the layers of the seismic image and slowly perpendicular to these layers.
Therefore, the diffusion corresponds to isotropic diffusion on some
underlying domain, roughly shaped like a tall thin rectangle where the layers of
the seismic image are roughly horizontal.  Therefore, using our intuition backed
by Theorem \ref{thm1}, we can recover the depth function in
the image by assuming the first eigenfunction of the diffusion operator
resembles the first Neumann Laplacian eigenfunction on the rectangle. 

\appendix

\section{Algorithm details} \label{sec:details}

\subsection{Adaptive Filtering}
In this section, we describe the adaptive filtering method. The filtering method
is described for a general three dimensional image since it requires no
additional effort, whereas for the rest of the algorithm we are mainly
interested in the filtered feature vectors computed for a two dimensional slice
of a three dimensional seismic image. Let $X$ denote a three dimensional seismic
image, $i$ denote a pixel from $X$, and $v(i)$ denote the value associated with
pixel $i$.
Each interior pixel $i$ of the seismic image is surrounded by a $3 \times 3
\times 3$ cube of pixels in $X$.  Given an interior pixel $i$, let $g(i)$ denote
the set of values associated with the pixels surrounding $i$. By fixing a method
of rearranging a $3 \times 3 \times 3$ cube of pixels into a vector, we can
consider each $g(i)$ as a vector in $\mathbb{R}^{27}$. Given a collection of feature
vectors $g(i)$, we apply principal component analysis (PCA) to create a low
dimensional description of each $g(i)$. Specifically, let $A$ denote the $27
\times N$ matrix whose columns consist of the feature vectors $g(i)$ for each of
the $N$ interior pixels.  Define the feature covariance matrix 
\[
C = \frac{1}{1-N} \left(A - A \frac{1}{N} 1 1^T \right) 
\left(A - A \frac{1}{N} 1 1^T \right)^T ,
\]
where $1$ denotes a column vector of all ones in $\mathbb{R}^N$. Let $u_1$ denote the
principal component of $C$, that is, the eigenvector of $C$ associated with
the largest eigenvalue. To each pixel $i$, we associate a scalar value $w(i)$
defined by
\[
w(i) = u_1^T g(i).
\]
We refer to $w(i)$ as the filtered pixel value of $i$. Next, we define the
filtered feature vector
$f(i)$ as the vector in $\mathbb{R}^{27}$ whose entries are the filtered pixel
values $w(j)$ of the pixels $j$ in the $3 \times 3 \times 3$ cube of pixels
surrounding pixel $i$. As before, the $3 \times 3 \times 3$ cubes of pixels are
converted into vectors via some fixed method of rearrangement. The filtered
features $f(i)$ are used to define the diffusion process in the following
section.  

\subsubsection*{Summary of Adaptive Filtering} 
To each pixel $i$ we associated the cube of surrounding pixel values
$g(i)$, and used PCA to summarize each cube by a single coordinate $w(i)$.
Finally, we associated each pixel with a filtered feature vector $f(i)$,
corresponding to a cube of the surrounding filtered coordinates $w(i)$.

\subsection{Kernel Construction} 
In this section we describe the construction of a diffusion process on the
pixels of a seismic image, which propagates rapidly along the layers of the
image and slowly perpendicular to these layers. For simplicity we restrict
our attention to a two dimensional slice of a three dimensional seismic image,
as shown in the left side of Figure \ref{fig:compare}.

Let $Y$ denote a two dimensional slice of a given three dimensional seismic
image, $i$ denote a pixel of $Y$, and $f(i)$ denote the filtered feature vector
$f(i) \in \mathbb{R}^{27}$ whose construction is described in the previous section.
Let $x(i)$ denote the spatial coordinates of pixel $i$ in $Y$. For example, if
$Y$ is of dimension $m \times n$, then $x(i) (x_1(i),x_2(i)) \in
\{1,\ldots,m\} \times \{1,\ldots,n\}$. For each pixel $i \in Y$, let $N_r(i)$
denote the collection of
pixels
\[ 
N_r(i) = \{ j : \|x(i) - x(j)\|_2 < r \},
\]
where $r > 0$ is some small radius, e.g., $r = 2$. We will refer to $N_r(i)$ as
the propagation neighborhood. In addition to $N_r(i)$ we define a larger
calibration neighborhood $C_R(i)$ for $R > r$, which is coarsely sampled.
Specifically, if $(x_1(i),x_2(i)) = x(i)$ denotes the spatial coordinates of
pixel $i$, then we define
\[ 
C_R(i) = \{ j : \|x(j) - x(i)\|_2 <R, x_1(j) - x_1(i) \equiv 0\, (\text{mod }
3), x_2(j) - x_2(i) \equiv 0\, (\text{mod } 3) \},
\]
so that $C_R(i)$ consists of the pixels whose $3 \times 3 \times 3$ feature
cubes are non-overlapping, and whose spatial distance is less than $R$ from
pixel $i$. Using the calibration neighborhood $C_R(i)$ we define
\[
M(i) = \max_{j \in C_R(i)} \|f(i) - f(j)\|_2^2,
\]
and define the asymmetric kernel $W_\varepsilon(i,j)$ by 
$$
W_\varepsilon(i,j) = 
\left\{
\begin{array}{cl}
\exp \left( - \frac{\|f(i) - f(j)\|_2^2}{\varepsilon  
M(i) } \right) & \text{if} \quad j \in N_r(i) \\
0 & \text{if} \quad j \not \in N_r(i).
\end{array} \right.
$$
The purpose of defining the calibration neighborhood $C_R(i)$ rather than just
normalizing the kernel over $N_r(i)$ is to avoid the case where the entire
neighborhood $N_r(i)$ is contained in a single layer. In this case, all pixels
within $N_r(i)$ should be strongly connected, but normalizing using $N_r(i)$
would emphasize minute differences in the feature vectors. Of course, this
problem could be avoided by increasing the radius $r$; however, increasing $r$
increases the number of nonzero entries in each row of the kernel $W$ by a
factor of $r^2$: better to define a separate calibration neighborhood $C_R(i)$
to avoid both the normalization and complexity issues.  As defined, the kernel
$W_\varepsilon(i,j)$ takes a maximum value of $1$ when $f(i) = f(j)$, and a
minimum value of 
\[
W_\varepsilon(i,j) = \exp \left( - \frac{1}{\varepsilon} \right)
\]
if $\|f(i) - f(j)\|_2^2 = M(i)$. Therefore, rather than choosing $\varepsilon
>0$ we parameterize our model by $\delta >0$ and define
\[
\varepsilon = - \frac{1}{\log \delta}
\]
With this parameterization, the minimum possible value of the kernel in each
neighborhood is $\delta$.  In order to be compatible with the standard Diffusion
Maps framework described in \cite{Coifman:2006}, the kernel used to define the
diffusion process must be symmetric. Therefore, we define a symmetrized version
$K(i,j)$ of $W_\varepsilon(i,j)$ by 
\[
K(i,j) = \frac{1}{2} \left( W_\varepsilon(i,j) + W_\varepsilon(j,i) \right) 
\]
Notice that the dependence on $\varepsilon$ of $K$ (which in turn depends on
$\delta$) was suppressed for notational simplicity. 

\subsubsection*{Summary of Kernel Construction} We first choose a radius $r > 0$ which defines a
spatially local propagation neighborhood. Second, we choose a radius $R > 0$
which defines a coarse calibration neighborhood. Third, we choose a parameter
$\delta >0$, which determines the minimum possible affinity in each
neighborhood. Finally, we symmetrize the constructed kernel, resulting in the
symmetric kernel $K(i,j)$.  In the following section, we use the kernel $K$ in
combination with the standard Diffusion Maps framework \cite{Coifman:2006} to
create a description of the layer geometry of the image $Y$.

\subsection{Layer Organization}
In this section, we describe how the kernel $K$ defined in the previous section
can be used to generate a description of the layer structure of $Y$. In
particular, we use the $(\alpha =0)$ diffusion kernel construction of
Diffusion Maps \cite{Coifman:2006} in combination with the randomized matrix
decomposition technique as introduced by \cite{Woolfe:2008},
as well as intuition about the form of the Laplacian-Neumann eigenvectors, to
define a new depth function $h$ of the pixels of the seismic image $Y$.

Given the kernel $K(i,j)$, we define the diffusion operator $P$ by
\[
P(i,j) = \frac{K(i,j)}{q(i)},
\]
where
\[ 
q(i) = \sum_{j} K(i,j).
\]
The matrix operator $P(i,j)$ is row stochastic; therefore, $P$ is a Markov
operator when applied from the right, and a diffusion or averaging operator when
applied from the left. Since $P$ is similar to the symmetric matrix $A$,
\[
A = Q^{1/2} P Q^{-1/2},
\]
where
\[ 
Q = \diag(q),
\]
the matrix $P$ is diagonalizable and has real eigenvalues. Moreover, the
eigenvectors $\psi_j$ of $P$ are exactly given by
\[
\psi_j = Q^{-1/2} \phi_j
\]
where $\phi_j$ is an eigenvector of $A$ of the same eigenvalue $\lambda_j$.

Therefore, in order to compute the eigenvectors of $P$, it suffices to decompose
$A$ and apply $Q^{-1/2}$. From a computational point of view, working with $A$
is preferable since $A$ is symmetric and has operator norm $1$. Furthermore, the
operator $A$ is sparse by construction with most $2  |N_r(i)|$ nonzero
entries in each row $i$, and is of finite rank invariant to the sampling density
of the data (at least when when the kernel bandwidth is fixed). Therefore,
randomized matrix decomposition algorithms, such as described in
\cite{Halko:2009}, provide an efficient method to compute the top few
eigenvectors of $A$. Given these eigenvectors, the eigenvectors of $P$ can be
recovered by applying $Q^{-1/2}$ as previously mentioned.

By construction, the eigenvectors of $P$ approximate the Neumann Laplacian
eigenfunctions of the underlying geometry of the pixels of $Y$ on which $P$
corresponds to isotropic diffusion, cf. \cite{Coifman:2006}. The largest
eigenvalue $\lambda_0$ of $P$ is $1$, which corresponds to a constant
eigenvector $\psi_0 = 1$. However, the first nontrivial eigenvector $\psi_1$ is
of interest. Recall that $P$ is based on the kernel $K$, which is defined to be
large (i.e., close to $1$) between similar pixels within a
spatial neighborhood, and on the order of a small constant $\delta$ (e.g.,
$\delta = 10^{-7}$) between highly dissimilar pixels with respect to the
calibration neighborhood. While moving along a layer in the seismic image $Y$,
the pixels (as characterized by the feature vector $f(i)$) should be highly
similar.  However, when moving perpendicular to a layer, the feature vectors
$f(i)$ will change rapidly, and pixels quickly become highly dissimilar to the
starting point.  As a result, the diffusion described by $P$ travels rapidly
along the layers of $Y$, and slowly perpendicular to these layers. That is to
say, the underlying geometry of the pixels on which $P$ represents isotropic
diffusion must have a relatively small distance between pixels on the same layer
when compared to pixels on different layers, which are equally spatially
distant in $Y$.

Therefore, the underlying geometry of $P$ roughly corresponds to a tall
rectangle shape, that is, a rectangle whose heigh is greater than its width. Of
course, due to the variation in the earth, the underlying domain will not be
exactly rectangle shaped, but resemble some type of deformed tall rectangle
where the layers run horizontally and are relatively flat. In Section
\ref{sec:proof}, we proved an explicit stability result, which indicates that
the first Neumann Laplacian eigenfunctions on tall rectangle are highly
stable under deformations on the underlying domain. This proof is consistent
with numerical experiments. Intuitively, the stability results from the fact
that if a rectangles height is a least twice its width, the first couple Neumann
Laplacian eigenfunctions will only oscillate vertically, which provides a buffer
to any horizontal oscillations.

Recall, that the first Neumann Laplacian eigenfunction on the rectangle
$[0,\varepsilon]
\times [0,1]$ where $1 > \varepsilon > 0$ is
\[
\psi_1(x,y) = \cos( \pi y )
\]
Assuming that $\psi_1$ is of a similar form, the height function on the
underlying domain can be recovered by
\[
h(i) = \arccos \left( \frac{\psi_1(i) - \min_j \psi_1(j)}{ \max_j \psi_1(j)
- \min_j \psi_1(j)} \right)
\]
up to linear scaling. Depending on the height of the underlying rectangular
domain, the next couple eigenfunctions $\psi_2$, $\psi_3$, etc. should resemble
$\cos \pi 2 y$, $\cos \pi 3 y$, etc., until the first eigenvector emerges with
horizontal oscillations, after which the 
eigenfunctions likely become difficult to interpret because of noise. 
\subsubsection*{Summary of layer organization} We have constructed a diffusion operator $P$ whose
eigenfunctions approximate the Neumann Laplacian eigenfunctions on the
underlying domain, which by the construction of the kernel $K$ should resemble a
tall rectangle. Using randomized matrix decomposition techniques, the
eigenfunctions of $P$ can be computed, and the $\arccos$ function can be used to
recover the height function of the underlying rectangular domain. This height
function should organize the layers of the image.

\subsection*{Acknowledgements}
We would like to thank Anthony Vassiliou and Lionel Woog for the seismic data
and for bringing the application problem to our attention. Furthermore, we would
like to thank Ronald Coifman for numerous insightful conversations, and Stefan
Steinerberger for many helpful comments and suggestions.

\end{document}